\newtheorem{theorem}{{\sc Theorem}}[section]
\newtheorem{lemma}[theorem]{{\sc Lemma}}
\newtheorem{corollary}[theorem]{Corollary}
\newtheorem{remark}[theorem]{Remark}
\newcommand\restr[2]{{ \left.\kern-\nulldelimiterspace #1 \vphantom{\big|} \right|_{#2}}}
\newcommand{\RR}{\mathbb{R}}
\newcommand{\ZZ}{\mathbb{Z}}
\newcommand{\CC}{\mathbb{C}}
\newcommand{\BS}{\mathbb{S}}
\newcommand{\n}{\noindent}
\newcommand{\comment}[1]{}
\renewcommand{\div}{\mathrm{div}}
\DeclareMathAlphabet{\pazocal}{OMS}{zplm}{m}{n}
\newcommand{\CT}{\pazocal{T}}
\newcommand{\PI}{\pazocal{I}}
\newcommand{\CS}{\pazocal{S}}
\title{Scattering of plane waves}
\author{Narek Hovsepyan\footnote{Department of Mathematics, Rutgers University, New Brunswick, NJ, USA (nh507@math.rutgers.edu)} \quad and \quad Michael S. Vogelius\footnote{Department of Mathematics, Rutgers University, New Brunswick, NJ, USA (vogelius@math.rutgers.edu)}}
\date{}
\begin{document}
\maketitle

\begin{abstract}
We formulate a problem that can be viewed as a natural variation of the so-called Pompeiu or Schiffer problem in the context of scattering of plane waves for the Linear Helmholtz equation.  For the two dimensional version of this variation, we establish conditions on the wave numbers and incident directions, that ensure a non-vanishing scattered field.
\end{abstract}

\section{Introduction}
\setcounter{equation}{0}

The problem studied here can be viewed as a somewhat generalized variation of the Pompeiu (or Schiffer) problem and the analysis represents a continuation of our recent work \cite{HV}, extending this to more general domains, but restricting it to more specific incident waves -- namely, plane waves. We also introduce a material parameter $a$ in the principal part of the Helmholtz operator, which now reads $\nabla \cdot (a \nabla u)+k^2qu$. 

In \cite{HV}, we studied the case $a=1$ and considered (piecewise) analytic domains, whose boundary parametrization $x(t) = x_1(t)+ix_2(t)$, $t \in [-1,1]$ extends to a complex analytic function of $t$ in some domain $\CT \subset \CC$ and has a simple critical point $t_0 \in \CT$. Under a suitable topological assumption -- namely, that that there exists a contour joining the endpoints $\pm 1$, passing through $t_0$ and lying in $\CT$, such that $\Re x$ attains its maximum on this contour at $t_0$ -- we proved, using the method of steepest descent, that general incident waves with complex analytic extensions, which are nonvanishing at $\left( x_1(t_0), x_2(t_0) \right) \in \CC^2$, necessarily scatter. In particular, plane waves satisfy these conditions and therefore always scatter from such domains. We then applied this result to several examples, including elliptical regions, regular nonconvex regions, and regions with cusps.

We remark that, for $a=1$  Cakoni and Vogelius \cite{CV} showed that within the class of Lipschitz inhomogeneities (and constant $q\neq 1$), non-scattering can generically occur only if the inhomogeneity has a real analytic boundary. The Lipschitz assumption can be relaxed to allow for certain cusps, see \cite{SS21}. Their proof used tools from free boundary regularity theory and was inspired by the work of Williams \cite{w2}, who employed this theory to prove that, a domain lacking the Pompeiu property necessarily has a real analytic boundary. Plane waves fall within the generic category. Regularity results for a general anisotropic inhomogeneous medium (including the scalar, isotropic case $a \neq 1$) were obtained in \cite{CVX}. For a scalar $a \neq 1$ plane waves fall in the generic category for all points on the boundary, where the propagation direction is not tangential to the boundary.

Our analysis here is inspired by the work of Brown and Kahane \cite{BK}, who proved that a convex domain, whose maximal width is at least twice its minimal width, has the Pompeiu property (see also Remark~\ref{REM BK}). For the definition of the Pompeiu property see Remark~\ref{REM Pomp}. We refer to \cite{zalc} for a survey on the Pompeiu problem; see also the Appendix of \cite{HV}.

\section{Preliminaries}
\setcounter{equation}{0}

Throughout, boldface symbols denote vectors, while regular (non-bold) symbols are reserved for scalar quantities. Let us make the following assumptions:

\begin{enumerate}

\item[(H1)] $D \subset \RR^2$ is a bounded, simply connected Lipschitz domain.

\item[(H2)] Outside $D$ we have a homogeneous background medium with normalized material parameters $(1,1)$. Inside $D$, the material parameters are $(a, q)$, where $a, q > 0$ are constants with $(a,q) \neq (1,1)$. We introduce the refractive index

\begin{equation} \label{n}
n = \sqrt{\frac{q}{a}}.
\end{equation} 
\end{enumerate}

\n Here $a$ and $q$ represent the inverses of the mass density and bulk modulus of the medium, respectively. Consider a non-trivial incident wave at a fixed wave number $k>0$

\begin{equation} \label{u^in Helm}
\Delta u^{\text{in}} + k^2 u^{\text{in}} = 0 \qquad \qquad \text{in} \ \RR^2,
\end{equation}

\n impinging on the inhomogeneity $D$. Let $u^{\text{tr}}$ and $u^{\text{sc}}$ denote the corresponding transmitted and scattered waves, respectively. If $u^{\text{in}}$ is non-scattering, i.e., $u^{\text{sc}}=0$, then the total field outside $D$ equals $u^{\text{in}}$ and so

\begin{equation} \label{nonsc system}
\begin{cases}
a \Delta u^{\text{tr}} + k^2 q u^{\text{tr}} = 0 \hspace{1in} &\text{in} \ D, 
\\[.1in]
u^{\text{tr}} = u^{\text{in}}, \qquad a \, \partial_\nu u^{\text{tr}} = \partial_\nu u^{\text{in}} &\text{on} \ \Gamma = \partial D,
\end{cases}
\end{equation}

\n where $\nu$ denotes the unit outer normal to the boundary $\Gamma$ of the region $D$. From \eqref{nonsc system}, we obtain the following necessary condition for $u^{\text{in}}$ to be non-scattering (see Section~\ref{SECT LEM} for the proof):

\begin{lemma} \label{LEM phi}
If $u^{\text{in}}$ is non-scattering, then

\begin{equation} \label{integral phi general}
k^2 \left( \frac{1}{a} - n^2 \right) \int_D u^{\text{in}} \phi d\bm{x} - \left(\frac{1}{a} - 1 \right) \int_D \nabla u^{\text{in}} \cdot \nabla \phi d\bm{x} = 0
\end{equation}

\n for all test functions $\phi \in H^2(D)$ satisfying $\Delta \phi + k^2 n^2 \phi = 0$ in $D$.
\end{lemma}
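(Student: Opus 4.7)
The plan is to derive the identity by applying Green's identity twice and eliminating the boundary terms using the transmission conditions.

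First I would rewrite the transmitted equation by dividing by $a$: inside $D$ we have
\begin{equation*}
\Delta u^{\text{tr}} + k^2 n^2 u^{\text{tr}} = 0,
\end{equation*}
so $u^{\text{tr}}$ and the test function $\phi$ satisfy the same Helmholtz equation with refractive index $n$. Green's second identity applied to $u^{\text{tr}}$ and $\phi$ gives
\begin{equation*}
0 = \int_\Gamma \bigl( \phi \, \partial_\nu u^{\text{tr}} - u^{\text{tr}} \, \partial_\nu \phi \bigr) \, dS.
\end{equation*}
Substituting the transmission conditions $u^{\text{tr}} = u^{\text{in}}$ and $\partial_\nu u^{\text{tr}} = (1/a) \partial_\nu u^{\text{in}}$ on $\Gamma$ turns this into the boundary identity
\begin{equation*}
\frac{1}{a} \int_\Gamma \phi \, \partial_\nu u^{\text{in}} \, dS = \int_\Gamma u^{\text{in}} \, \partial_\nu \phi \, dS.
\end{equation*}

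Next I would convert each of these boundary integrals into volume integrals over $D$. Using the first Green identity together with the PDEs $\Delta u^{\text{in}} = -k^2 u^{\text{in}}$ and $\Delta \phi = -k^2 n^2 \phi$ in $D$, I get
\begin{equation*}
\int_\Gamma \phi \, \partial_\nu u^{\text{in}} \, dS = \int_D \nabla u^{\text{in}} \cdot \nabla \phi \, d\bm{x} - k^2 \int_D u^{\text{in}} \phi \, d\bm{x},
\end{equation*}
\begin{equation*}
\int_\Gamma u^{\text{in}} \, \partial_\nu \phi \, dS = \int_D \nabla u^{\text{in}} \cdot \nabla \phi \, d\bm{x} - k^2 n^2 \int_D u^{\text{in}} \phi \, d\bm{x}.
\end{equation*}
Note that this step silently uses enough regularity of $u^{\text{in}}$ and $\phi$ near $\Gamma$ to justify the trace and integration by parts; since $u^{\text{in}}$ is smooth in $\RR^2$ and $\phi \in H^2(D)$ with $D$ Lipschitz, this is standard.

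Finally, writing $I = \int_D u^{\text{in}} \phi \, d\bm{x}$ and $J = \int_D \nabla u^{\text{in}} \cdot \nabla \phi \, d\bm{x}$, the boundary identity becomes
\begin{equation*}
\frac{1}{a}\bigl( J - k^2 I \bigr) = J - k^2 n^2 I,
\end{equation*}
which, after rearranging, is exactly \eqref{integral phi general}. There is no real obstacle here; the only subtlety is keeping track of the factor $1/a$ coming from the Neumann jump and making sure both Green identities are applied with matching orientation of $\nu$. If one wishes to avoid invoking $H^2$ regularity of $u^{\text{tr}}$ in the first Green identity, the boundary integral involving $u^{\text{tr}}$ can instead be established in the weak form via the variational formulation of \eqref{nonsc system}, which gives the same identity directly.
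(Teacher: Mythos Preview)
Your proof is correct and follows essentially the same Green's-identity approach as the paper; the only organizational difference is that the paper first introduces $u = u^{\text{tr}} - u^{\text{in}}$ and integrates against $\phi$, whereas you apply Green's second identity directly to the pair $(u^{\text{tr}},\phi)$ and then convert two boundary integrals instead of one. Both routes hinge on the same transmission conditions and the same conversion of $\int_\Gamma \phi\,\partial_\nu u^{\text{in}}\,dS$ into volume form, and your remark about the regularity needed for $u^{\text{tr}}$ is apt.
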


Our goal is to derive a contradiction to the above equation for incident plane waves. Let us therefore consider a fixed incident plane wave in the direction of the unit vector $\bm{\eta}$:

\begin{equation} \label{u^in}
u^{\text{in}}(\bm{x}) = e^{ik \bm{\eta} \cdot \bm{x}}, \qquad \qquad \bm{\eta} \in \BS = \{\bm{\eta} \in \RR^2: \bm{\eta} \cdot \bm{\eta} = 1\}.
\end{equation} 

\n The main idea is to also take plane wave solutions as test functions, but with a complex wave vector. Namely, $\phi(\bm{x}) = e^{i kn \bm{\xi} \cdot \bm{x}}$, where $\cdot$ denotes the inner product of $\RR^2$ and $\bm{\xi} \in \CS$, where $\CS$ denotes the complexification of the unit circle $\BS$, {\it i.e.},

\begin{equation} \label{CS}
\CS = \left\{ \bm{\xi} \in \CC^2 : \bm{\xi} \cdot \bm{\xi} = 1 \right\}.
\end{equation}

\n The restriction imposed on $\bm{\xi}$ ensures that $\phi$ satisfies the required PDE. For this choice of test function, $\nabla u^{\text{in}} \cdot \nabla \phi = -k^2 n (\bm{\xi} \cdot \bm{\eta})  u^{\text{in}} \phi$ and \eqref{integral phi general} simplifies to

\begin{equation} \label{I = 0}
\PI(\bm{\xi}) : = C(\bm{\xi}) I(\bm{\xi}) = 0 \qquad \qquad \forall \ \bm{\xi} \in \CS,
\end{equation}

\n where

\begin{equation} \label{C I def}
C(\bm{\xi}) = \frac{1}{a} - n^2 + \left( \frac{1}{a} - 1 \right) n \, \bm{\xi}\cdot \bm{\eta}  \qquad \text{and} \qquad  I(\bm{\xi}) = \int_D e^{ik(\bm{\eta} + n \bm{\xi}) \cdot \bm{x}} d\bm{x}.  
\end{equation}

\begin{remark} \label{REM Pomp}
\normalfont
To highlight the connection between the failure of the Pompeiu property and the non-scattering phenomenon, we recall the definition of the Pompeiu property. Rather than using the original formulation, we invoke the celebrated result of Brown, Schreiber, and Taylor \cite{BST73} (see also \cite{bern80}), which gives an equivalent definition: a bounded, simply connected Lipschitz domain $D$ fails to have the Pompeiu property if and only if there exists a constant $\rho>0$ such that

\begin{equation*}
\int_D e^{i \rho \bm{\xi} \cdot \bm{x}} d \bm{x} =0 \qquad \qquad \forall \ \bm{\xi} \in \CS.
\end{equation*}

\end{remark}

\vspace{.1in}

To conclude that the plane wave $e^{i k \bm{\eta} \cdot \bm{x}}$ scatters, we look for a test vector $\bm{\xi} \in \CS$, such that $\PI(\bm{\xi}) \neq 0$. In Theorem~\ref{THM n<1} we prove that such a choice is always possible when $n \leq 1$, which implies that a plane wave always scatters, independently of $k, \bm{\eta}$ and the geometry of $D$. In fact, when $n<1$ we use a complex-valued admissible $\bm{\xi}$, while in the case $n=1$ this choice becomes inadmissible. Instead, we use real vectors $\bm{\xi}$ to derive a set of inequalities involving $k, \bm{\eta}$, and the directional widths of $D$, that guarantee scattering. By varying the test vector $\bm{\xi}$ these inequalities are shown to cover the entire spectrum $k \in (0, \infty)$. The case $n > 1$ is more delicate and is treated in Theorem~\ref{THM n>1}, where inequalities involving $n, k$, $\bm{\eta}$ and directional widths of $D$ are derived to guarantee the scattering of plane waves. Unlike the situation for $n \leq 1$, these inequalities do not cover the full spectrum. They guarantee scattering on intervals of the form 

\begin{equation*}
k \in \left( 0, \tfrac{k_+}{2} \right],
\end{equation*}

\n where $k_+$ depends explicitly on the directional widths of $D$, $\bm{\eta}$ and $n$. For strictly convex regions $D$ stronger results can be obtained. In Theorem~\ref{THM convex}, we show that scattering is also guaranteed on intervals of the form

\begin{equation*}
k \in [m k_-, m k_+], \qquad \quad m=1,2,...,
\end{equation*}

\n where $k_-$ is given by an explicit formula. These intervals drift towards infinity and grow larger as $m$ increases. Under the assumption that $k_+ \geq 2 k_-$, a condition depending only on $n$, $\bm{\eta}$, and the directional widths of $D$, the union of all these intervals covers the full spectrum $k \in (0, \infty)$. Thus, under this assumption,  scattering is guaranteed for every $k$,  as asserted in Corollary~\ref{CORO h1>2h0}. If $na \leq 1$, a discrete sequence of $k$'s must be excluded; see Remark~\ref{REM na<1}. Finally, in Corollary~\ref{CORO two examples} we apply this criterion to two classes of strictly convex domains. Restricting to the case $na > 1$ (which includes the case $a=1$, $n=\sqrt{q}>1$) we show that:

\begin{enumerate}
\item[(1)] If $D$ has constant width, then all plane waves scatter provided $n \leq 3$.

\item[(2)] If the maximal width of $D$ is at least twice its minimal width ({\it i.e.}, $D$ is at least twice as wide in one direction as in some other direction) then all plane waves scatter.
\end{enumerate}

\n Finally, in Section~\ref{SECT slab} we study a simple model in which $D$ is an unbounded domain -- namely, a vertical slab -- and consider an incident plane wave at normal incidence. This reduces the problem to a one-dimensional setting. In Lemma~\ref{LEM slab}, we show that this incident plane wave is non-scattering for appropriate choices of $k, n,$ and the slab thickness. In Remark~\ref{REM slab and disk}, we discuss the analogy between this and the non-scattering of particular Herglotz waves by a disk. Our motivation for considering the infinite slab comes from the so-called Salisbury screen \cite{Sal, knott}, one of the earliest models exhibiting non-scattering of incident plane waves. While this is a classical model well studied in engineering and physics, it has received less attention in the mathematics literature.

\section{Main Results}
\setcounter{equation}{0}

The proofs of all main results are presented in Section~\ref{SECT Proofs}.

\subsection{General inhomogeneities}

\begin{theorem} \label{THM n<1}
Assume (H1) and (H2). If $n \leq 1$, then every incident plane wave \eqref{u^in} is scattered by the inhomogeneity $D$.
\end{theorem}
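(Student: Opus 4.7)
The plan is to argue by contradiction from the identity \eqref{I = 0}: if $u^{\mathrm{in}}=e^{ik\bm{\eta}\cdot\bm{x}}$ is non-scattering, then $\PI(\bm{\xi})=C(\bm{\xi})I(\bm{\xi})=0$ for every $\bm{\xi}\in\CS$, so it suffices to exhibit a single admissible test vector $\bm{\xi}^*\in\CS$ with $\PI(\bm{\xi}^*)\neq 0$. My guiding observation will be that the oscillatory integral $I(\bm{\xi})$ reduces to a strictly positive Laplace-type integral as soon as $\bm{\eta}+n\bm{\xi}$ is purely imaginary, and that the compatibility of this requirement with the constraint $\bm{\xi}\cdot\bm{\xi}=1$ singles out exactly the regime $n\leq 1$ of the theorem.

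\textbf{Case $n<1$.} I will write $\bm{\eta}+n\bm{\xi}^*=i\bm{w}$ with $\bm{w}\in\RR^2$ and separate real and imaginary parts of $\bm{\xi}^*\cdot\bm{\xi}^*=1$ to obtain $\bm{w}\cdot\bm{\eta}=0$ and $|\bm{w}|^2=1-n^2$. Letting $\bm{\eta}^\perp\in\RR^2$ be a unit vector orthogonal to $\bm{\eta}$, I will take
\[
\bm{\xi}^*\;=\;-\tfrac{1}{n}\bm{\eta}\;+\;\tfrac{i\sqrt{1-n^2}}{n}\,\bm{\eta}^\perp\;\in\;\CS,
\]
so that $I(\bm{\xi}^*)=\int_D e^{-k\sqrt{1-n^2}\,\bm{\eta}^\perp\cdot\bm{x}}\,d\bm{x}>0$ follows at once from positivity of the integrand. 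A brief computation using $\bm{\xi}^*\cdot\bm{\eta}=-1/n$ will then give $C(\bm{\xi}^*)=\tfrac{1}{a}-n^2-(\tfrac{1}{a}-1)=1-n^2>0$, whence $\PI(\bm{\xi}^*)>0$, the desired contradiction.

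\textbf{Case $n=1$.} Here the construction above degenerates: $\bm{w}=0$ is the only option, forcing $\bm{\xi}^*=-\bm{\eta}$, and $C(-\bm{\eta})=0$ renders this choice inadmissible. I will instead exploit two observations: first, $I(-\bm{\eta})=|D|>0$, so $I$ is nonzero at the would-be critical point and small perturbations should preserve this; second, $n=1$ together with $(a,q)\neq(1,1)$ forces $a\neq 1$, so that $C(\bm{\xi})=(\tfrac{1}{a}-1)(1+\bm{\xi}\cdot\bm{\eta})$ carries a nonvanishing overall prefactor. Perturbing along the real unit circle, I will set $\bm{\xi}(\theta)=-\cos\theta\,\bm{\eta}+\sin\theta\,\bm{\eta}^\perp\in\BS\subset\CS$: for $\theta\in(0,\pi)$ one has $C(\bm{\xi}(\theta))=(\tfrac{1}{a}-1)(1-\cos\theta)\neq 0$, while continuity of $I$ in $\theta$ gives $I(\bm{\xi}(\theta))\to|D|>0$ as $\theta\to 0$, so $\PI(\bm{\xi}(\theta))\neq 0$ for all sufficiently small nonzero $\theta$, again contradicting \eqref{I = 0}.

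\textbf{Main obstacle.} The substantive step is the algebraic observation that the system $\bm{\eta}+n\bm{\xi}=i\bm{w}$, $\bm{\xi}\in\CS$, $\bm{w}\in\RR^2$, is solvable precisely when $n\leq 1$; once this is in hand, the verifications that $C(\bm{\xi}^*)\neq 0$ and $I(\bm{\xi}^*)\neq 0$ are short and elementary. The endpoint $n=1$ is a minor technicality handled by a real perturbation. The genuine failure of the construction for $n>1$ foreshadows why that regime requires the more delicate analysis of Theorem~\ref{THM n>1}.
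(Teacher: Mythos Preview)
Your proof is correct. For the case $n<1$ your argument is essentially identical to the paper's: both choose $\bm{\xi}^*$ so that $\bm{\eta}+n\bm{\xi}^*$ is purely imaginary, making $I(\bm{\xi}^*)$ a positive real-exponential integral, and both verify $C(\bm{\xi}^*)=1-n^2>0$ via $\bm{\xi}^*\cdot\bm{\eta}=-1/n$. The only cosmetic difference is that the paper first rotates so that $\bm{\eta}=(1,0)$, while you work coordinate-free with $\bm{\eta}^\perp$.

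For $n=1$ your route genuinely diverges from the paper's. The paper parametrizes $k(\bm{\eta}+\bm{\xi})=R\bm{\lambda}$ with $\bm{\lambda}\in\BS$, $\bm{\lambda}\cdot\bm{\eta}>0$, reduces $I$ via Fubini to a one-dimensional integral $\int_0^{w(\bm{\lambda})} L(t)\sin(Rt)\,dt$, and then observes that the resulting scattering bound $k\le \pi/(2w(\bm{\lambda})\,\bm{\lambda}\cdot\bm{\eta})$ covers all of $(0,\infty)$ as $\bm{\lambda}\cdot\bm{\eta}\to 0^+$. Your argument is shorter and more elementary: you simply note that $I(-\bm{\eta})=|D|>0$ and invoke continuity of $I$ in $\bm{\xi}$ to conclude $I(\bm{\xi}(\theta))\neq 0$ for small $\theta\neq 0$, where $C(\bm{\xi}(\theta))\neq 0$. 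The paper's approach has the advantage of reusing the slicing machinery that is anyway needed for the $n>1$ analysis of Theorem~\ref{THM n>1}, and yields an explicit quantitative inequality; your approach has the advantage of being self-contained and avoiding any geometric estimate, since continuity of $\bm{\xi}\mapsto I(\bm{\xi})$ follows immediately from dominated convergence on the bounded domain $D$.
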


We note that the case $n<1$ corresponds to waves traveling faster inside the medium $D$ than in the background, while the case $n=1$ corresponds to equal wave speeds inside and outside of $D$.

\begin{remark} \label{REM n=1}
\normalfont The fact that plane waves always scatter when $n\leq 1$ is particularly interesting, given that there exist other examples of non-scattering incident waves in this case. For the disk (with $a=1$ and $q\neq 1$) there is an infinite sequence of wave numbers at which one can find non-scattering Herglotz waves \cite{CK,CV}. For the unit square (with $a=q\neq 1$) the incident wave $u^{\text{in}}(\bm{x}) = \cos m \pi x_1 \cos l \pi x_2$, which solves the Helmholtz equation with $k^2 = \pi^2 (m^2 + l^2)$, is non-scattering for any $m, l \in \ZZ$ \cite{CVX}.  
\end{remark}

We now turn to the case $n >1$. Before stating our results, we introduce some important quantities. For $\bm{\lambda} \in \BS$, let $w(\bm{\lambda})$ denote the width of $D$ in the direction $\bm{\lambda}$, defined as the length of the orthogonal projection of $D$ onto the line spanned by $\bm{\lambda}$. That is,

\begin{equation*}
w(\bm{\lambda}) = \sup_{\bm{x} \in D} \bm{\lambda} \cdot \bm{x} - \inf_{\bm{x} \in D} \bm{\lambda} \cdot \bm{x}.
\end{equation*}

\n We also introduce the positive increasing function

\begin{equation} \label{M}
M(r) = r + \sqrt{n^2 - 1 + r^2}.
\end{equation}

\begin{theorem} \label{THM n>1}
Assume (H1), (H2) and $n>1$. The incident plane wave \eqref{u^in} in direction $\bm{\eta} \in \BS$ is scattered by the inhomogeneity $D$ provided 

\begin{itemize}

\item[(i)] $n a > 1$ and

\begin{equation*}
k \leq \frac{\pi}{\displaystyle w(\bm{\lambda}) M(\bm{\lambda} \cdot \bm{\eta})} \qquad \qquad \text{for some} \ \bm{\lambda} \in \BS, \ or
\end{equation*}

\item[(ii)] $n a \leq 1$ and 

\begin{equation*}
k \leq \frac{\pi}{\displaystyle w(\bm{\lambda}) M(\bm{\lambda} \cdot \bm{\eta})} \qquad \qquad \text{for some} \ \bm{\lambda} \in \BS\backslash \{\bm{\lambda}_\pm\},
\end{equation*}

where $\bm{\lambda}_\pm$ are the unit vectors that satisfy the equation

\begin{equation} \label{lambda eta = r}
\bm{\lambda} \cdot \bm{\eta} = \frac{a \sqrt{n^2 - 1}}{\sqrt{1-a^2}} =: r_0.
\end{equation}

\n In other words, $\bm{\lambda}_\pm$ are the vectors forming angles $\pm \arccos r_0$ with $\bm{\eta}$.

\end{itemize}

\end{theorem}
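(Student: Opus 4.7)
The plan is to invoke the necessary condition \eqref{I = 0} from Lemma~\ref{LEM phi}: if $u^{\text{in}}$ is non-scattering, then $\PI(\bm{\xi}) = C(\bm{\xi})\,I(\bm{\xi}) = 0$ for every $\bm{\xi} \in \CS$. I aim to exhibit, under the hypotheses of either (i) or (ii), a single admissible $\bm{\xi}$ at which both factors are nonzero, contradicting this identity. The construction parallels the Brown--Kahane argument and uses a \emph{real} test vector generated by the direction $\bm{\lambda} \in \BS$ featured in the hypothesis; this is made possible precisely by the assumption $n > 1$, which guarantees that the relevant quadratic has real roots.

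Given $\bm{\lambda} \in \BS$ satisfying the stated inequality and writing $r = \bm{\lambda}\cdot\bm{\eta}$, I set $\bm{\xi} := (M(r)\bm{\lambda} - \bm{\eta})/n$. Since $n > 1$ makes $M(r)$ a positive real, the identity $M(r)^2 - 2rM(r) = n^2 - 1$, immediate from the definition \eqref{M}, gives $\bm{\xi}\cdot\bm{\xi} = 1$, so $\bm{\xi} \in \BS \subset \CS$. The point of the construction is that $\bm{\eta} + n\bm{\xi} = M(r)\bm{\lambda}$, which turns $I(\bm{\xi})$ into a one-dimensional Fourier coefficient:
\begin{equation*}
I(\bm{\xi}) \;=\; \int_D e^{ikM(r)\bm{\lambda}\cdot\bm{x}}\,d\bm{x}.
\end{equation*}
Placing $\bm{\lambda}$ along the first axis, setting $\alpha = \inf_{\bm{x}\in D}\bm{\lambda}\cdot\bm{x}$ and letting $\ell(x_1)$ denote the length of the slice of $D$ at height $x_1$, Fubini yields $e^{-ikM(r)\alpha}\,I(\bm{\xi}) = \int_0^{w(\bm{\lambda})} e^{ikM(r)y}\,\ell(\alpha+y)\,dy$. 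The hypothesis $kM(r)w(\bm{\lambda}) \leq \pi$ forces $\sin(kM(r)y) \geq 0$ on $[0, w(\bm{\lambda})]$, with strict positivity on the open interval; since $D$ is open and connected, $\ell > 0$ on its open projection $(\alpha, \alpha + w(\bm{\lambda}))$. Taking the imaginary part therefore gives a strictly positive value, so $I(\bm{\xi}) \neq 0$.

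The algebraic heart of the proof is the verification that $C(\bm{\xi}) \neq 0$. Substituting $n\bm{\xi}\cdot\bm{\eta} = rM(r) - 1$ reduces the definition in \eqref{C I def} to
\begin{equation*}
C(\bm{\xi}) \;=\; 1 - n^2 + \tfrac{1-a}{a}\,rM(r),
\end{equation*}
so for $a \neq 1$, vanishing is equivalent to $g(r) := rM(r) = a(n^2-1)/(1-a)$. I will show $g$ is strictly increasing on $[-1,1]$: squaring the hypothetical equation $g'(r) = 0$ collapses to $(n^2-1)^2 = 0$, impossible for $n > 1$, while $g'(0) = \sqrt{n^2-1} > 0$. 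Thus $g:[-1,1]\to[1-n, 1+n]$ is a bijection, and three sub-cases settle the theorem. When $a = 1$, $C = 1 - n^2 \neq 0$. When $a > 1$ (automatically $na > 1$), the target $a(n^2-1)/(1-a)$ is strictly less than $1 - n$---algebraically equivalent to the obvious $an + 1 > 0$---so $C$ never vanishes, which falls under case (i). When $a < 1$, a direct computation confirms $g(r_0) = a(n^2-1)/(1-a)$, and $r_0 \leq 1$ iff $na \leq 1$; hence $na > 1$ pushes the unique root outside $[-1,1]$ (case (i), no obstruction), while $na \leq 1$ pins the vanishing directions to exactly $\bm{\lambda}_\pm$, which case (ii) explicitly excludes.

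The main obstacle I anticipate is this last case analysis---tracking the threshold $r_0 = 1 \Leftrightarrow na = 1$ and confirming that the vanishing locus of $C$ on $\BS$ consists of precisely the two directions $\bm{\lambda}_\pm$. With both $C(\bm{\xi}) \neq 0$ and $I(\bm{\xi}) \neq 0$ established, the product $\PI(\bm{\xi}) \neq 0$ contradicts \eqref{I = 0}, so the plane wave must scatter.
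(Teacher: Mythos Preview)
Your proof is correct and follows essentially the same approach as the paper: construct a real $\bm{\xi}\in\BS$ from $\bm{\lambda}$ via $\bm{\eta}+n\bm{\xi}=M(r)\bm{\lambda}$, reduce $I(\bm{\xi})$ to a one-dimensional integral whose imaginary part is positive when $kM(r)w(\bm{\lambda})\le\pi$, and then analyze the vanishing of $C(\bm{\xi})$ in terms of $r=\bm{\lambda}\cdot\bm{\eta}$. The only cosmetic difference is in that last step: the paper squares the equation $rM(r)=a(n^2-1)/(1-a)$ directly and discards the spurious root $r=-r_0$, whereas you first prove $r\mapsto rM(r)$ is strictly increasing and then locate (or rule out) the unique root by a case split on $a$---both arguments reach the same conclusion.
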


Note that any admissible direction $\bm{\lambda}$ can be chosen to obtain an inequality ensuring the scattering of the corresponding plane wave. In part $(i)$, every direction $\bm{\lambda}$ is admissible, while in part $(ii)$ all but two exceptional directions are admissible. If we rewrite $r_0$ in the slightly different form

\begin{equation} \label{r_0 = 1 -}
r_0 =  \frac{\sqrt{(na)^2 - a^2}}{\sqrt{1-a^2}}~,
\end{equation}

\n it is clear that $r_0 \in (0, 1]$ if $n>1$ and $na \leq 1$. 

There are two particularly useful directions in our estimates: $\bm{\lambda} = - \bm{\eta}$ and $\bm{\lambda} = \bm{\eta}^\perp$, where $\bm{\eta}^\perp$ denotes a unit vector perpendicular to $\bm{\eta}$ (the specific choice between the two possible orthogonal directions is irrelevant). Since $r_0$ cannot take the values $-1$ or $0$, these directions are also admissible in part $(ii)$. Therefore, for $n>1$, scattering is guaranteed provided

\begin{equation*}
k \leq \frac{\pi}{w(\bm{\eta}) M(-1)}, \qquad \text{or} \qquad k \leq \frac{\pi}{w(\bm{\eta}^\perp) M(0)}.
\end{equation*}

\n Simplifying the above values of the function $M$, we arrive at

\begin{corollary}
Assume (H1), (H2) and $n>1$. The plane wave $u^{\text{in}}$ is scattered by $D$ provided that

\begin{equation} \label{k eta and perp}
k \leq \max \left\{ \frac{\pi}{w(\bm{\eta}) \left( n - 1 \right)}; \frac{\pi}{w(\bm{\eta}^\perp) \sqrt{n^2 - 1} } \right\}.
\end{equation}

\end{corollary}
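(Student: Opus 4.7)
The plan is to apply Theorem~\ref{THM n>1} with two specific choices of the admissible direction $\bm{\lambda}$, namely $\bm{\lambda}=-\bm{\eta}$ and $\bm{\lambda}=\bm{\eta}^\perp$, and then to check that the resulting sufficient conditions are exactly the two bounds appearing inside the maximum. Since the conclusion of the theorem is a disjunction over admissible $\bm{\lambda}$, it suffices that at least one of these two bounds holds, which is precisely the meaning of taking the maximum on the right-hand side of \eqref{k eta and perp}.

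First I would verify admissibility. In part (i) (the case $na>1$) every $\bm{\lambda}\in\BS$ is admissible, so there is nothing to check. In part (ii) (the case $na\leq 1$) the exceptional directions $\bm{\lambda}_\pm$ satisfy $\bm{\lambda}\cdot\bm{\eta}=r_0$, and from the alternative expression \eqref{r_0 = 1 -} one reads off that $r_0\in(0,1]$ whenever $n>1$ and $na\leq 1$. Hence $\bm{\lambda}\cdot\bm{\eta}=-1$ and $\bm{\lambda}\cdot\bm{\eta}=0$ both differ from $r_0$, so neither $-\bm{\eta}$ nor $\bm{\eta}^\perp$ coincides with $\bm{\lambda}_\pm$. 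Thus both choices are admissible in every case covered by the theorem.

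Next I would substitute the two choices into the explicit formula \eqref{M} for $M$. A routine computation gives $M(-1)=-1+\sqrt{n^2-1+1}=n-1$ and $M(0)=\sqrt{n^2-1}$, so the bound $k\leq \pi/(w(\bm{\lambda})M(\bm{\lambda}\cdot\bm{\eta}))$ becomes
\[
k\leq \frac{\pi}{w(\bm{\eta})(n-1)}\quad\text{or}\quad k\leq \frac{\pi}{w(\bm{\eta}^\perp)\sqrt{n^2-1}},
\]
respectively. Taking the maximum of the two right-hand sides yields \eqref{k eta and perp}, which concludes the derivation.

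There is no substantive obstacle here; the entire content is a specialization of Theorem~\ref{THM n>1}. The only thing one has to be slightly careful about is the admissibility check in part (ii), where one needs the observation (already recorded just before the corollary in the excerpt) that $r_0\neq -1,0$, so that the two convenient directions $-\bm{\eta}$ and $\bm{\eta}^\perp$ are never excluded.
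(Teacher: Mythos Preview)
Your proposal is correct and follows essentially the same route as the paper: the corollary is derived in the text immediately preceding it by specializing Theorem~\ref{THM n>1} to the two directions $\bm{\lambda}=-\bm{\eta}$ and $\bm{\lambda}=\bm{\eta}^\perp$, observing that $r_0\notin\{-1,0\}$ so both are admissible even when $na\leq 1$, and computing $M(-1)=n-1$, $M(0)=\sqrt{n^2-1}$. The only detail you leave implicit (as does the paper at this point) is the trivial symmetry $w(-\bm{\eta})=w(\bm{\eta})$, which is why $w(\bm{\eta})$ rather than $w(-\bm{\eta})$ appears in the final bound.
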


In particular, the above estimate shows continuity between the cases $n \leq 1$ and $n>1$. Indeed, as $n \to 1^+$ the guaranteed scattering extends to the full frequency spectrum $k \in (0, \infty)$. It also admits a useful physical interpretation. Fix an inhomogeneity $D$ with material parameter $n>1$. Then plane waves with sufficiently small $k$ (i.e. large wavelength) always scatter from $D$. If the inhomogeneity is much smaller than the wavelength, the incident wave ``sees" the inhomogeneity only weakly and is not expected to interact strongly with it. The estimate shows that one cannot achieve non-scattering by relying on such weak interactions, and that stronger wave–matter interactions are needed to even hope to achieve non-scattering. This in itself is not unexpected: in many interesting cases, for instance when $a=1$, or when $a-1$ and $q-1$ have opposite sign,  it is known that the real transmission eigenvalue spectrum is bounded away from zero (see Theorem 4.15  and Theorem 4.42 of \cite{CCH}). Since being a real transmission eigenvalue is a nececessary condition for a wave number to have an associated non-scattered incident wave, it follows that any incident wave, in particular any plane wave, must scatter for sufficiently small wave numbers in these cases. What is entirely novel about our estimates is their very explicit dependence on the geometry of the domain and the direction of the incident plane wave.

Strong interactions correspond to small wavelengths (i.e., large $k$), and this case is treated in Section~\ref{SEC convex} for strictly convex inhomogeneities. For such domains we obtain an infinite set of increasing frequency intervals with guarantied scattering. Furthermore, if the maximal width of the domain is more than twice the minimal width, then the union of these intervals covers all of $\mathbb{R}^+$, and so we have scattering of plane waves for all wave numbers $k$.

Let us next restate Thereom~\ref{THM n>1} in a slightly different form. The largest interval of $k$ with guaranteed scattering is obtained by minimizing the denominator, so let

\begin{equation} \label{h_0 def}
h_0(\bm{\eta}) = \min_{\bm{\lambda} \in \BS} \left\{ w(\bm{\lambda}) M(\bm{\lambda} \cdot \bm{\eta}) \right\}.
\end{equation}

\n When $na > 1$, scattering is guaranteed for

\begin{equation} \label{k < h_0}
k \leq \frac{\pi}{h_0(\bm{\eta})}.
\end{equation}

\n Since we are minimizing a continuous function over a compact set, the minimum $h_0(\bm{\eta})$ is attained at some point $\bm{\lambda}_0 \in \BS$. If $\bm{\lambda}_0 \neq \bm{\lambda}_\pm$, then for $na \leq 1$ we again have guaranteed scattering in the interval \eqref{k < h_0}. Otherwise, if $\bm{\lambda}_0 = \bm{\lambda}_\pm$, then $h_0(\bm{\eta}) = w (\bm{\lambda}_\pm) M(r_0)$ and the endpoint $k = \pi / h_0(\bm{\eta})$ must be excluded from the interval \eqref{k < h_0} in order to ensure scattering.

\begin{remark}
\normalfont
We do not believe the bounds established here are necessarily optimal. In particular for strictly convex inhomogeneities we are inclined to believe that plane waves always scatter. This belief is in part supported by the enhanced results for such domains, which we describe in detail in the following section.
\end{remark}

\begin{remark}[Sound soft/hard obstacles] \mbox{}
\normalfont

\n The limit $a \to \infty$ (or $n \to 0$) includes scattering by a Dirichlet (sound-soft) obstacle as a particular case. Theorem~\ref{THM n<1} applies here and implies that plane waves always scatter from a sound-soft obstacle. The limit $a \to 0$ (or $n \to \infty$) corresponds to scattering by a Neumann (sound-hard) obstacle. Part $(i)$ of Theorem~\ref{THM n>1} applies in this case; however, the estimate on $k$ degenerates to 0. We remark that plane waves always scatter from both sound-soft and sound-hard obstacles -- this is immediate and does not require the above results. Indeed, if $u^{\text{in}}$ were non-scattering then for a sound-soft obstacle $u^{\text{in}} = 0$ on $\Gamma$, and for a sound-hard obstacle $\partial_\nu u^{\text{in}} = 0$ on $\Gamma$. These conditions cannot be satisfied by the plane wave \eqref{u^in}.
\end{remark}

\subsection{Strictly convex inhomogeneities} \label{SEC convex}

Along with \eqref{h_0 def}, introduce

\begin{equation} \label{h_1 def}
h_1(\bm{\eta}) = \max_{\bm{\lambda} \in \BS} \left\{ w(\bm{\lambda}) M(\bm{\lambda} \cdot \bm{\eta}) \right\}.
\end{equation}

\begin{theorem} \label{THM convex}
Assume (H1), (H2), and that $D$ is strictly convex. Suppose $n>1$ and $na > 1$. The incident plane wave \eqref{u^in} in direction $\bm{\eta} \in \BS$ is scattered by the inhomogeneity $D$ provided that

\begin{equation} \label{k in union}
k \in \bigcup_{m=1}^\infty \left[ \frac{2\pi m}{h_1(\bm{\eta})}, \frac{2\pi m}{h_0(\bm{\eta})} \right].
\end{equation}

\end{theorem}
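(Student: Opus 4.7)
The plan is to reduce the scattering claim, via Lemma~\ref{LEM phi} and \eqref{I = 0}, to exhibiting a single real $\bm{\xi}_*\in\BS$ at which $\PI(\bm{\xi}_*)=C(\bm{\xi}_*)I(\bm{\xi}_*)\neq 0$. The hypotheses $n>1$ and $na>1$ force $C$ to be bounded away from zero on $\BS$: the only real root of $C(\bm{\xi})=0$ would correspond to $\bm{\xi}\cdot\bm{\eta}=(an^2-1)/((1-a)n)$, and this quantity has absolute value strictly exceeding $1$ under our assumptions (elementary algebra reduces both the $a<1$ and $a>1$ cases to products of positive factors). Thus the task becomes: find a real $\bm{\xi}_*\in\BS$ with $I(\bm{\xi}_*)\neq 0$.

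Next I parametrize the test vectors by $\bm{\lambda}\in\BS$. Solving $k(\bm{\eta}+n\bm{\xi})=\alpha\bm{\lambda}$ under $|\bm{\xi}|=1$ and $\alpha>0$ gives $\alpha=kM(\bm{\lambda}\cdot\bm{\eta})$, so $\bm{\xi}\leftrightarrow\bm{\lambda}$ is a bijection of $\BS$, and $I(\bm{\xi})=\int_D e^{i\rho\bm{\lambda}\cdot\bm{x}}d\bm{x}$ with $\rho:=kM(\bm{\lambda}\cdot\bm{\eta})$. The function $f(\bm{\lambda}):=w(\bm{\lambda})M(\bm{\lambda}\cdot\bm{\eta})$ is continuous on the connected set $\BS$ with extreme values $h_0(\bm{\eta})$ and $h_1(\bm{\eta})$, so the intermediate value theorem produces, for every $k\in[2\pi m/h_1,2\pi m/h_0]$ with $m\geq 1$, a direction $\bm{\lambda}_*\in\BS$ satisfying the resonance condition $\rho_*\,w(\bm{\lambda}_*)=2\pi m$, where $\rho_*:=kM(\bm{\lambda}_*\cdot\bm{\eta})$.

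To evaluate $I(\bm{\xi}_*)$ I slice $D$ by hyperplanes perpendicular to $\bm{\lambda}_*$. Writing $w:=w(\bm{\lambda}_*)$, $s_-:=\inf_{\bm{x}\in D}\bm{\lambda}_*\cdot\bm{x}$, and $L(u)$ for the length of the cross-section $\{\bm{x}\in D:\bm{\lambda}_*\cdot\bm{x}=s_-+u\}$, Fubini yields $I=e^{i\rho_* s_-}\int_0^w L(u)e^{i\rho_* u}du$. Strict convexity makes $L$ concave and positive on $(0,w)$ with the universal square-root profiles $L(u)\sim c_-\sqrt{u}$ as $u\to 0^+$ and $L(u)\sim c_+\sqrt{w-u}$ as $u\to w^-$, where $c_\pm>0$ are determined by the curvature at the two support points. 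The standard one-sided asymptotic $\int_0^{\infty}\sqrt{u}\,e^{i\rho u}\,du\sim\Gamma(3/2)\rho^{-3/2}e^{3i\pi/4}$, applied at each endpoint, gives
\begin{equation*}
\int_0^w L(u)\,e^{i\rho_* u}\,du = \Gamma(3/2)\,\rho_*^{-3/2}\bigl[c_-\,e^{3i\pi/4}+c_+\,e^{i\rho_* w-3i\pi/4}\bigr]+O(\rho_*^{-2}).
\end{equation*}
The resonance $\rho_* w=2\pi m$ collapses $e^{i\rho_* w-3i\pi/4}$ to $e^{-3i\pi/4}$, so the bracket becomes $c_-e^{3i\pi/4}+c_+e^{-3i\pi/4}$, a complex number of modulus $\sqrt{c_-^2+c_+^2}>0$. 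Hence the leading term is of exact order $\rho_*^{-3/2}$ and nonzero.

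The main obstacle is quantitative: one must prove that the $O(\rho_*^{-2})$ remainder is genuinely dominated by the leading $\Gamma(3/2)\sqrt{c_-^2+c_+^2}\,\rho_*^{-3/2}$ term uniformly over all admissible pairs $(\bm{\lambda}_*,m)$, most delicately for $m=1$, where $\rho_*$ need not be large. I would address this through a quantitative Erd\'elyi / van der Corput-type expansion of $\int_0^w L\,e^{i\rho u}du$ in which the implicit constant is controlled purely by the geometry of $D$ (via bounds on the curvatures at the support points and on $L$ in the interior); alternatively, one can exploit the freedom to slide $\bm{\lambda}_*$ along the level curve $\{f=2\pi m/k\}$ and pass to a favorable sub-arc, arguing by continuity of the leading coefficient. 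Once $I(\bm{\xi}_*)\neq 0$ is established, the identity \eqref{I = 0} combined with Lemma~\ref{LEM phi} rules out non-scattering and completes the proof.
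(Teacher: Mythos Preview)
Your reduction is exactly right up through the point where you fix $\bm{\lambda}_*$ with $\rho_*\,w(\bm{\lambda}_*)=2\pi m$: the treatment of $C(\bm{\xi})$, the $\bm{\xi}\leftrightarrow\bm{\lambda}$ parametrization, the intermediate value argument, and the Fubini slicing all match the paper. The gap is in how you show $\int_0^w L(u)e^{i\rho_* u}\,du\neq 0$.

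First, the ``universal square-root profile'' $L(u)\sim c_-\sqrt{u}$ is \emph{not} universal. Under (H1) the boundary is only Lipschitz, and strict convexity here simply means $\partial D$ contains no segments. At a corner the profile is linear, $L(u)\sim c\,u$; at a $C^2$ point with vanishing curvature (say local graph $y=x^4$) one gets $L(u)\sim c\,u^{1/4}$. So the leading endpoint contributions you write down are in general incorrect, and the coefficients $c_\pm$ need not even exist.

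Second, even if you add smoothness and positive curvature, the expansion you invoke is an \emph{asymptotic} as $\rho_*\to\infty$; there is no mechanism forcing the $O(\rho_*^{-2})$ remainder to be dominated by the $\rho_*^{-3/2}$ term when $m=1$ and $\rho_*=2\pi/w$ is of order one. The fixes you sketch (uniform Erd\'elyi bounds, sliding $\bm{\lambda}_*$ along a level set) are not arguments---in particular the level set $\{f=2\pi m/k\}$ is generically a finite set of points, not an arc, so there is no freedom to slide.

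The paper avoids all of this with a one-line elementary observation. Because $D$ is convex, $L$ is concave on $[0,w]$, and strict convexity gives $L(0)=L(w)=0$. Integrate the real part by parts:
\[
\Re\int_0^w L(u)e^{i\rho_* u}\,du=\int_0^w L(u)\cos(\rho_* u)\,du=-\frac{1}{\rho_*}\int_0^w L'(u)\sin(\rho_* u)\,du.
\]
Since $-L'$ is increasing and $\rho_* w=2\pi m$, pair each interval $[2\pi j/\rho_*,(2j+1)\pi/\rho_*]$ with $[(2j+1)\pi/\rho_*,(2j+2)\pi/\rho_*]$ for $j=0,\dots,m-1$; on each pair the weight put on the negative half of $\sin$ strictly exceeds that on the positive half, so the integral is strictly negative. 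No asymptotics, no regularity beyond concavity of $L$, and the conclusion is exact for every $m\ge 1$.
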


\vspace{.1in}

\begin{remark} \label{REM na<1}
\normalfont The above result also holds for $na \leq 1$, except that the wave numbers

\begin{equation*}
k = \frac{2\pi m}{w(\bm{\lambda}_\pm) M(r_0)}, \qquad \qquad m=1,2,...
\end{equation*}

\n (potentially) must be removed from the union \eqref{k in union}. Recall that $\bm{\lambda}_\pm$ are the exceptional directions defined in Theorem~\ref{THM n>1}. 
\end{remark}

\begin{remark} \normalfont
Note that $h_0(\bm{\eta}) \neq h_1(\bm{\eta})$ for every $\bm{\eta}$. Indeed, if equality held, then the function $w(\bm{\lambda}) M(\bm{\lambda} \cdot \bm{\eta})$ would have to be constant for all $\bm{\lambda}$, but its values at $\bm{\lambda} = \pm \bm{\eta}$ are always different. This implies that the intervals in \eqref{k in union} eventually start overlapping as $m$ grows, and thus cover an interval of the form $[k_0, \infty)$. Consequently, the possible non-scattering wave numbers $k>0$ cannot accumulate at $\infty$. Since it is known that the non-scattering wave numbers have no finite accumulation point \cite{CCH,VX}, it follows under the assumptions of Theorem \ref{THM convex} that the set of non-scattering wave numbers is finite (possibly empty) for any fixed incident direction $\bm{\eta}$. This finiteness result (in the case $a=1$) was previously established in \cite{VX} using asymptotic methods.      
\end{remark}

Since the intervals in \eqref{k in union} expand as $m$ increases, Theorem~\ref{THM convex} in particular applies to sufficiently large wave numbers $k$. Let us be more specific and draw a simple conclusion from this result in terms of more explicit and easier-to-compute quantities, namely the maximal and minimal widths of $D$:

\begin{equation} \label{w* and w_*}
w^* = \max_{\bm{\lambda} \in \BS} w(\bm{\lambda}), \qquad \qquad w_* = \min_{\bm{\lambda} \in \BS} w(\bm{\lambda}).
\end{equation}

\begin{corollary}
Assume the hypotheses of Theorem~\ref{THM convex}, and suppose that $w^* (n-1) > w_* (n+1)$. Then the incident plane wave \eqref{u^in} is scattered by $D$ provided 

\begin{equation} \label{k >}
k \geq \frac{2\pi}{w^* (n-1) - w_* (n+1)},
\end{equation}

\n regardless of the direction $\bm{\eta} \in \BS$.
\end{corollary}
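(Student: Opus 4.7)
The plan is to derive the corollary directly from Theorem~\ref{THM convex} by showing that, under the hypothesis $w^{*}(n-1) > w_{*}(n+1)$, the union of intervals in \eqref{k in union} covers the entire ray $\bigl[2\pi/(w^{*}(n-1)-w_{*}(n+1)),\infty\bigr)$. The main conceptual step is to show that the consecutive intervals in \eqref{k in union} eventually overlap, identify the first index $m^{*}$ at which overlap begins, and then bound the left endpoint $2\pi m^{*}/h_1(\bm\eta)$ of the resulting covered half-line in terms of the maximal and minimal widths of $D$.

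I would first obtain simple extremal bounds on $h_0(\bm\eta)$ and $h_1(\bm\eta)$. Since $M$ is increasing on $[-1,1]$ with $M(-1)=n-1$ and $M(1)=n+1$, choosing $\bm\lambda$ to be a direction of minimal width in \eqref{h_0 def} yields
\[
h_0(\bm\eta) \,\le\, w_{*}\,M(\bm\lambda\cdot\bm\eta) \,\le\, w_{*}(n+1),
\]
while choosing $\bm\lambda$ to be a direction of maximal width in \eqref{h_1 def} yields
\[
h_1(\bm\eta) \,\ge\, w^{*}\,M(\bm\lambda\cdot\bm\eta) \,\ge\, w^{*}(n-1).
\]
Combined with the standing hypothesis, this gives $h_1(\bm\eta) - h_0(\bm\eta) \ge w^{*}(n-1) - w_{*}(n+1) > 0$, so in particular $h_1(\bm\eta) > h_0(\bm\eta)$.

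Next I would analyze when two consecutive intervals
$\bigl[\tfrac{2\pi m}{h_1},\tfrac{2\pi m}{h_0}\bigr]$ and $\bigl[\tfrac{2\pi(m+1)}{h_1},\tfrac{2\pi(m+1)}{h_0}\bigr]$ overlap. A direct computation shows this happens exactly when $m \ge h_0/(h_1-h_0)$, and since this condition is preserved under increasing $m$, letting $m^{*} = \lceil h_0/(h_1-h_0)\rceil$ the union $\bigcup_{m \ge m^{*}}[\tfrac{2\pi m}{h_1},\tfrac{2\pi m}{h_0}]$ equals $\bigl[\tfrac{2\pi m^{*}}{h_1(\bm\eta)},\infty\bigr)$. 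Using $m^{*} \le h_0/(h_1-h_0) + 1$ I obtain
\[
\frac{2\pi m^{*}}{h_1(\bm\eta)} \,\le\, \frac{2\pi}{h_1(\bm\eta)}\cdot\frac{h_1(\bm\eta)}{h_1(\bm\eta)-h_0(\bm\eta)} \,=\, \frac{2\pi}{h_1(\bm\eta)-h_0(\bm\eta)}.
\]

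Finally I would chain the inequalities: by the lower bound on $h_1-h_0$ derived above,
\[
\frac{2\pi m^{*}}{h_1(\bm\eta)} \,\le\, \frac{2\pi}{w^{*}(n-1)-w_{*}(n+1)},
\]
so any $k$ satisfying \eqref{k >} lies in $\bigl[\tfrac{2\pi m^{*}}{h_1(\bm\eta)},\infty\bigr) \subset \bigcup_{m=1}^{\infty}\bigl[\tfrac{2\pi m}{h_1(\bm\eta)},\tfrac{2\pi m}{h_0(\bm\eta)}\bigr]$. Scattering of the plane wave $u^{\text{in}}$ then follows from Theorem~\ref{THM convex}. The only mildly delicate step is handling the ceiling function cleanly so that the "$+1$" telescopes with $h_0/(h_1-h_0)$ into the sharp factor $h_1/(h_1-h_0)$; everything else is elementary estimation on $M$.
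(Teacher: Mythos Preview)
Your proof is correct and follows essentially the same strategy as the paper: bound $h_0(\bm\eta)\le w_*(n+1)$ and $h_1(\bm\eta)\ge w^*(n-1)$ using the monotonicity of $M$, and then show these bounds force the union \eqref{k in union} to cover all sufficiently large $k$. The only difference is packaging: where you analyze the first index $m^*$ at which consecutive intervals begin to overlap and bound $2\pi m^*/h_1$, the paper works directly with the cruder intervals $[2\pi m/(w^*(n-1)),\,2\pi m/(w_*(n+1))]$ and rewrites ``$k$ lies in one of them'' as ``the interval $[kw_*(n+1)/(2\pi),\,kw^*(n-1)/(2\pi)]$ contains an integer,'' which is immediate once that interval has length $\ge 1$---precisely condition \eqref{k >}. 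Both arguments are elementary and equivalent; the paper's version just sidesteps the ceiling-function bookkeeping.
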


\begin{proof}
Since $M$ is an increasing function, we apply the basic estimates

\begin{equation*}
h_1(\bm{\eta}) \geq w^* M(-1) = w^* (n-1), \qquad \qquad h_0(\bm{\eta}) \leq w_* M(1) = w_* (n+1),
\end{equation*}

\n which hold for all $\bm{\eta} \in \BS$. Theorem~\ref{THM convex} implies that scattering is guaranteed if there exists some $m = 1,2,...$ such that

\begin{equation*}
k \in \left[ \frac{2\pi m}{w^* (n-1)}, \frac{2\pi m}{w_* (n+1)} \right]
\qquad \Longleftrightarrow \qquad m \in \left[ \frac{k w_*}{2\pi}(n+1), \frac{kw^* }{2\pi}  (n-1) \right]
\end{equation*}

\n Clearly, an integer $m$ exists in the above interval, provided its length is at least 1, which is precisely the condition \eqref{k >}.

\end{proof}

Theorem~\ref{THM convex} can be combined with part $(i)$ of Theorem~\ref{THM n>1}, and it is easy to see that when $h_1(\bm{\eta}) \geq 2 h_0(\bm{\eta})$, the consecutive intervals starting from \eqref{k < h_0} and continuing with those in \eqref{k in union} overlap and cover the entire frequency spectrum $k \in (0, \infty)$. Therefore, we obtain

\begin{corollary} \label{CORO h1>2h0}
Assume the hypotheses of Theorem~\ref{THM convex}, and suppose the direction vector $\bm{\eta}$ satisfies

\begin{equation} \label{h_1 > 2 h_0}
h_1(\bm{\eta}) \geq 2 h_0(\bm{\eta}),
\end{equation}

\n then the incident wave \eqref{u^in} in direction $\bm{\eta}$ is scattered by $D$, regardless of the value of $k$.
\end{corollary}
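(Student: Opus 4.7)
The plan is to combine the two scattering criteria we already have at our disposal — part (i) of Theorem~\ref{THM n>1}, which covers small $k$, and Theorem~\ref{THM convex}, which covers the infinite family of frequency windows indexed by $m$ — and verify that under the width hypothesis $h_1(\bm{\eta})\ge 2h_0(\bm{\eta})$ their union exhausts $(0,\infty)$. Since the assumptions include $na>1$, there are no exceptional directions $\bm{\lambda}_\pm$ to worry about, so both results apply cleanly in the stated form.

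First, I would record the guaranteed-scattering set. By part (i) of Theorem~\ref{THM n>1} the plane wave in direction $\bm{\eta}$ scatters for every
\begin{equation*}
k \in J_0 := \left(0, \frac{\pi}{h_0(\bm{\eta})}\right],
\end{equation*}
and by Theorem~\ref{THM convex} it scatters for every
\begin{equation*}
k \in J_m := \left[\frac{2\pi m}{h_1(\bm{\eta})}, \frac{2\pi m}{h_0(\bm{\eta})}\right], \qquad m=1,2,\dots.
\end{equation*}
It therefore suffices to show that $J_0 \cup \bigcup_{m\ge 1} J_m = (0,\infty)$.

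Next I would check the two adjacency conditions. The junction between $J_0$ and $J_1$ requires $2\pi/h_1(\bm{\eta}) \le \pi/h_0(\bm{\eta})$, which is exactly the hypothesis $h_1(\bm{\eta}) \ge 2 h_0(\bm{\eta})$. For consecutive intervals $J_m$ and $J_{m+1}$ to meet I need
\begin{equation*}
\frac{2\pi(m+1)}{h_1(\bm{\eta})} \le \frac{2\pi m}{h_0(\bm{\eta})} \qquad \Longleftrightarrow \qquad \frac{h_1(\bm{\eta})}{h_0(\bm{\eta})} \ge \frac{m+1}{m}.
\end{equation*}
Since $(m+1)/m \le 2$ for every $m\ge 1$, the hypothesis again delivers this inequality, with the tightest case occurring at $m=1$. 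Hence the intervals chain together without gaps, and because the right endpoint $2\pi m/h_0(\bm{\eta})$ tends to $\infty$ as $m \to \infty$, their union is all of $(0,\infty)$. This covers every positive wave number, finishing the proof.

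There is no serious obstacle here — the argument is purely a bookkeeping check on how the frequency windows from the two preceding theorems fit together. The only point worth flagging is that the monotonicity of $M$ and the observation in the previous remark ensure $h_0(\bm{\eta})<h_1(\bm{\eta})$, so the intervals $J_m$ are genuinely nondegenerate, but even that is not needed for the above covering argument beyond ensuring that the hypothesis $h_1(\bm{\eta}) \ge 2 h_0(\bm{\eta})$ is consistent.
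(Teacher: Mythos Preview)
Your proposal is correct and follows exactly the approach the paper indicates: combine the small-$k$ interval $(0,\pi/h_0(\bm{\eta})]$ from part~(i) of Theorem~\ref{THM n>1} with the family of intervals from Theorem~\ref{THM convex}, and check that under $h_1(\bm{\eta})\ge 2h_0(\bm{\eta})$ consecutive intervals overlap so that their union is all of $(0,\infty)$. The paper merely asserts this covering as ``easy to see''; you have supplied the straightforward bookkeeping.
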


The condition \eqref{h_1 > 2 h_0} is generally not straightforward to apply in a specific scattering problem. However, it has two interesting consequences for two different types of (strictly convex) domains, which are easy to establish and for which \eqref{h_1 > 2 h_0} is satisfied for all incidence directions $\bm{\eta}$. 

\begin{corollary} \label{CORO two examples}
Assume the hypotheses of Theorem~\ref{THM convex}.

\begin{enumerate}
\item[$(i)$] Assume that $D$ has constant width\footnote{Domains with constant width include disks, but there are many other constant width domains with analytic boundaries, see e.g. \cite{JPF}. For a comprehensive, up-to-date discussion of constant width domains, see \cite{HM}. Incidentally, a convex domain of constant width is automatically strictly convex. }, i.e., $w(\bm{\lambda}) = const$ for all $\bm{\lambda} \in \BS$, and that $n \leq 3$. Then every incident plane wave \eqref{u^in} is scattered by $D$.

\item[$(ii)$] Assume $D$ satisfies $w^* \geq 2 w_*$, i.e., the maximal width of $D$ is at least twice its minimal width. Then every incident plane wave \eqref{u^in} is scattered by $D$.

\end{enumerate}

\end{corollary}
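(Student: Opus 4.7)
The plan is to deduce both statements as direct applications of Corollary~\ref{CORO h1>2h0}: I only need to verify the inequality $h_1(\bm{\eta}) \geq 2 h_0(\bm{\eta})$ for every $\bm{\eta} \in \BS$, under each of the two geometric assumptions. Two structural facts drive the analysis: the width function satisfies $w(-\bm{\lambda}) = w(\bm{\lambda})$, and the function $M$ defined in \eqref{M} is strictly increasing with $M(-1) = n-1$, $M(0) = \sqrt{n^2-1}$, and $M(1) = n+1$.

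For part $(i)$, constant width reduces $h_0$ and $h_1$ to explicit expressions. With $w(\bm{\lambda}) \equiv w$, the definitions \eqref{h_0 def} and \eqref{h_1 def} give
\begin{equation*}
h_0(\bm{\eta}) = w \min_{\bm{\lambda} \in \BS} M(\bm{\lambda}\cdot\bm{\eta}) = w\,M(-1) = w(n-1),
\qquad h_1(\bm{\eta}) = w\,M(1) = w(n+1),
\end{equation*}
where the extrema are attained at $\bm{\lambda} = \mp \bm{\eta}$ because $M$ is monotone. The hypothesis $h_1(\bm{\eta}) \geq 2 h_0(\bm{\eta})$ then reduces to the scalar inequality $n+1 \geq 2(n-1)$, i.e., $n \leq 3$, which is exactly the stated assumption.

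For part $(ii)$, I will not need to locate the extremizers of $w(\bm{\lambda}) M(\bm{\lambda}\cdot\bm{\eta})$; a single well-chosen test direction on each side suffices. Pick $\bm{\lambda}^*, \bm{\lambda}_* \in \BS$ with $w(\bm{\lambda}^*) = w^*$ and $w(\bm{\lambda}_*) = w_*$, and then, using that widths are invariant under $\bm{\lambda} \mapsto -\bm{\lambda}$, replace them by their negatives if necessary so that $\bm{\lambda}^* \cdot \bm{\eta} \geq 0$ and $\bm{\lambda}_* \cdot \bm{\eta} \leq 0$. Monotonicity of $M$ then yields
\begin{equation*}
h_1(\bm{\eta}) \geq w^* M(\bm{\lambda}^*\cdot\bm{\eta}) \geq w^* M(0) = w^* \sqrt{n^2-1},
\qquad h_0(\bm{\eta}) \leq w_* M(\bm{\lambda}_*\cdot\bm{\eta}) \leq w_* M(0) = w_* \sqrt{n^2-1}.
\end{equation*}
Dividing gives $h_1(\bm{\eta})/h_0(\bm{\eta}) \geq w^*/w_* \geq 2$ by hypothesis, so \eqref{h_1 > 2 h_0} holds for every $\bm{\eta}$, and Corollary~\ref{CORO h1>2h0} completes the proof.

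There is really no serious obstacle here; the mild subtlety is recognizing that $w$ is invariant under sign reversal of the direction, which is what frees us from having to understand the joint maximizers/minimizers of $w(\bm{\lambda}) M(\bm{\lambda}\cdot\bm{\eta})$ in part $(ii)$ and lets $M(0) = \sqrt{n^2-1}$ serve as a common pivot that cancels in the ratio. For part $(i)$, the only thing to check is that $n > 1$ together with the hypothesis $na > 1$ of Theorem~\ref{THM convex} is compatible with $n \leq 3$, which is immediate. Thus both conclusions follow from a direct verification of the hypothesis of Corollary~\ref{CORO h1>2h0}.
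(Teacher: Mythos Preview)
Your proof is correct and follows the same strategy as the paper: both parts reduce to verifying the hypothesis \eqref{h_1 > 2 h_0} of Corollary~\ref{CORO h1>2h0}. Part $(i)$ is identical to the paper's argument. For part $(ii)$ you pivot both estimates through $M(0)=\sqrt{n^2-1}$, whereas the paper instead chooses the sign of $\bm{\lambda}^*$ so that $\bm{\lambda}^*\cdot\bm{\eta}\ge \bm{\lambda}_*\cdot\bm{\eta}$ and compares $M(\bm{\lambda}^*\cdot\bm{\eta})$ to $M(\bm{\lambda}_*\cdot\bm{\eta})$ directly; both are minor variants of the same elementary bound.
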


\begin{proof}
$(i)$ Write $w(\bm{\lambda}) = w$. Since $M$ is an increasing function, we have

\begin{equation*}
h_1(\bm{\eta}) = w \max_{\bm{\lambda} \in \BS} \left\{ M(\bm{\lambda} \cdot \bm{\eta}) \right\} = w M(1).
\end{equation*}

\n In other words, $h_1$ is independent of $\bm{\eta}$. An analogous formula holds for $h_0(\bm{\eta})$, so that the inequality \eqref{h_1 > 2 h_0} reduces to $M(1) \geq 2 M(-1)$. Or equivalently, using the definition \eqref{M}, this becomes $1+ n \geq 2 (-1 + n)$, i.e. $n \leq 3$. 

\vspace{.1in}

$(ii)$ Assume that the maximum and minimum in \eqref{w* and w_*} are attained at $\bm{\lambda}^*$ and $\bm{\lambda}_*$, respectively. Since $w(\bm{\lambda}) = w(-\bm{\lambda})$, we can then estimate

\begin{equation*}
h_1(\bm{\eta}) \geq w(\bm{\lambda}^*) M(\pm \bm{\lambda}^* \cdot \bm{\eta}) \geq 2 w(\bm{\lambda}_*) M(\pm \bm{\lambda}^* \cdot \bm{\eta}).
\end{equation*}

\n Suppose that $\bm{\lambda}^* \cdot \bm{\eta} \geq \bm{\lambda}_* \cdot \bm{\eta}$. Since $M$ is an increasing function we obtain

\begin{equation*}
h_1(\bm{\eta}) \geq 2 w(\bm{\lambda}_*) M(\bm{\lambda}_* \cdot \bm{\eta}) \geq 2 h_0(\bm{\eta}).
\end{equation*}

\n On the other hand, if $-\bm{\lambda}^* \cdot \bm{\eta} \geq -\bm{\lambda}_* \cdot \bm{\eta}$, we obtain the same estimate using $-\bm{\lambda}_*$. Hence, \eqref{h_1 > 2 h_0} holds in either case, regardless of $\bm{\eta}$, and the proof is complete.  

\end{proof}

\begin{remark}
\normalfont When $a=1$, our results from \cite{HV} imply that plane waves always scatter from any elliptical region $D$, including the case when $D$ is a disk, regardless of the value of $n=\sqrt{q}\neq 1$, and without any condition on maximal vs. minimal width.
\end{remark}

\begin{remark} \label{REM BK}
\normalfont Corollary~\ref{CORO h1>2h0} or part $(ii)$ of Corollary~\ref{CORO two examples} can be viewed as an extension of the result of Brown and Kahane \cite{BK} to the scattering context. Their result states that a strictly convex set with $w^* \geq 2 w_*$ has the Pompeiu property. Formally, the Pompeiu context corresponds to setting $u^{\text{in}} = 1$, i.e. $\bm{\eta} = 0$ in the integral $I(\bm{\xi})$ \eqref{C I def}. In that case, we can freely rotate $D$, since rotations can be absorbed into the test vector parameter $\bm{\xi}$, which varies in a rotationally invariant set $\CS$. This rotational invariance simplifies the analysis and leads to their conclusion provided $h_1 \geq 2h_0$, where these functions no longer contain the term $M(\bm{\lambda} \cdot \bm{\eta})$. In other words, $h_1$ and $h_0$ reduce to $w^*$ and $w_*$, respectively. Moreover, in the Pompeiu context the term $C(\bm{\xi})$ \eqref{C I def} is not present.
\end{remark}

\subsection{Infinite slab: non-scattering of plane waves} \label{SECT slab}

Let the inhomogeneity be an infinite vertical slab of thickness $w$, i.e., $D = \{(x,y): 0< x < w\}$, with material parameters $a=1$ and $q>0$ constant.  Outside the medium, the parameters are $(1,1)$. As before $n = \sqrt{q}$. We consider an incident plane wave at normal incidence, traveling along the $x$-axis:

\begin{equation} \label{u^in Salisbury}
u^{\text{in}}(x) = e^{-ikx}.
\end{equation}

\n Consequently, the scattered and transmitted waves also propagate along the $x$-axis, making the problem effectively one-dimensional.

\begin{lemma} \label{LEM slab}
Assuming the setting introduced above, the incident plane wave \eqref{u^in Salisbury} is non-scattering if and only if

\begin{enumerate}
\item[$\bullet$] $\displaystyle k w = 2\pi m$ \quad and \quad $\displaystyle n = \frac{l}{m}$ \quad for some $m,l = 1,2,...$ ($m \neq l$), or

\vspace{.1in}

\item[$\bullet$] $\displaystyle k w = \pi + 2\pi m$ \quad and \quad $\displaystyle n = \frac{1+2l}{1+2m}$ \quad for some $m,l = 0,1,2,...$ ($m\neq l$)
\end{enumerate}

\end{lemma}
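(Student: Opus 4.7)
The plan is to reduce the problem to a one-dimensional transmission system and then analyze the resulting matching conditions as a Diophantine problem on the exponentials. Since the incident wave and the slab are both invariant under translations in $y$, we seek a $y$-independent transmitted field $u^{\text{tr}}(x) = A e^{iknx} + B e^{-iknx}$ on $0 < x < w$, which is the general solution of $(u^{\text{tr}})'' + k^2 n^2 u^{\text{tr}} = 0$. Non-scattering forces the total field outside $D$ to equal $u^{\text{in}}(x) = e^{-ikx}$, so the transmission conditions in \eqref{nonsc system} (with $a = 1$) reduce to continuity of $u^{\text{tr}}$ and $(u^{\text{tr}})'$ at $x = 0$ and $x = w$.

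Matching at $x = 0$ yields $A + B = 1$ and $n(A - B) = -1$, hence
\[
A = \frac{n - 1}{2n}, \qquad B = \frac{n + 1}{2n}.
\]
These are both nonzero because the hypothesis $q \neq 1$ forces $n \neq 1$. Matching at $x = w$ then produces
\[
A e^{iknw} + B e^{-iknw} = e^{-ikw}, \qquad n\bigl(A e^{iknw} - B e^{-iknw}\bigr) = -e^{-ikw}.
\]
Solving this $2\times 2$ linear system for $A e^{iknw}$ and $B e^{-iknw}$, and dividing by the nonzero $A$ and $B$, collapses the pair to two pure phase equations,
\[
e^{iknw} = e^{-ikw}, \qquad e^{-iknw} = e^{-ikw}.
\]
Equivalently, there exist integers $p \geq 1$ and $s \in \ZZ$ such that $kw(n+1) = 2\pi p$ and $kw(n-1) = 2\pi s$.

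The last step is the Diophantine analysis. Adding and subtracting these two identities gives $kw = \pi(p - s)$ and $kwn = \pi(p + s)$. Setting $M = p - s$ and $L = p + s$, the positivity of $kw$ and $n$ forces $M, L \geq 1$, and $M \equiv L \pmod{2}$ since $M + L = 2p$. When both are even, writing $M = 2m$ and $L = 2l$ yields $kw = 2\pi m$ and $n = l/m$ with $m, l \geq 1$, while $n \neq 1$ forces $m \neq l$. When both are odd, writing $M = 2m + 1$ and $L = 2l + 1$ yields $kw = \pi + 2\pi m$ and $n = (2l+1)/(2m+1)$ with $m, l \geq 0$ and $m \neq l$. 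These are precisely the two cases in the statement. The converse is immediate: for any admissible $(k, n, w)$, the explicit $u^{\text{tr}}$ above satisfies all four transmission equations by construction, supplying the required non-scattering solution.

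The argument is essentially a direct calculation; the only genuine obstacle is the final parity bookkeeping, ensuring that the two families are separated cleanly and that the allowed ranges of $m$ and $l$ correctly encode the constraints $n > 0$ and $n \neq 1$.
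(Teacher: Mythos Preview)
Your proof is correct and follows essentially the same route as the paper: reduce to a one-dimensional ODE for the transmitted field, impose the four matching conditions at $x=0$ and $x=w$, and read off the resulting arithmetic constraints. The only cosmetic differences are that the paper works with $u = u^{\text{tr}} - u^{\text{in}}$ in the $\sin/\cos$ basis (arriving at $\sin(knw)=0$ and $e^{-ikw}=\cos(knw)$) rather than with $u^{\text{tr}}$ in the exponential basis, and leaves the final parity bookkeeping implicit, whereas you spell it out.
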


\begin{proof}
With  $u = u^{\text{tr}} - u^{\text{in}}$, the non-scattering problem \eqref{nonsc system} reduces to

\begin{equation*}
\begin{cases}
u'' + k^2 n^2 u = k^2 (1-n^2) u^{\text{in}} \qquad \qquad \text{in} \ (0,w)
\\
u(0)=u(w)=0
\\
u'(0)=u'(w)=0.
\end{cases}
\end{equation*}

\n This is a simple ODE whose solution is $u(x) = -e^{-ikx} + c_1 \sin (knx) + c_2 \cos(knx)$, where $c_1, c_2 \in \CC$. The boundary conditions $u(0)=u'(0)=0$ imply that $c_2 = 1$ and $c_1 = -i/n$. The remaining two boundary conditions can be reduced to

\begin{equation*}
\sin (knw) = 0 \qquad \text{and} \qquad e^{-ikw} = \cos(knw),
\end{equation*}

\n which concludes the proof.

\end{proof}

\begin{remark}
\normalfont
In electromagnetism, the classical Salisbury screen consists of the slab $D$, along with a resistive sheet at $x=0$ with resistivity $R>0$ and a perfect conductor at $x=w$. Both can be modeled using appropriate boundary conditions. Similar to Lemma~\ref{LEM slab}, one can show that a suitable choice of the parameters $k, w$ and $R$ leads to non-scattering of the incident plane wave \eqref{u^in Salisbury}.
\end{remark}

\begin{remark} \label{REM slab and disk}
\normalfont
The above model can be viewed as the plane-wave analogue of the non-scattering of a particular Herglotz wave for the disk. Let us describe this analogy in more detail. Herglotz waves are continuous superpositions of plane waves; a particular example is the radial incident Herglotz wave $u^{\text{in}}(\bm{x}) = J_0(k|\bm{x}|)$, where $J_0$ is the Bessel function of order $0$. Let $D=D_w(0)$ be the disk of radius $w>0$ centered at the origin, and assume the same material parameters as in the slab case above. Then this incident wave is non-scattering for $D$ if and only if \cite{CK}

\begin{equation*}
J_0'(kw) J_0(knw) - n J_0(kw) J_0'(knw) = 0.
\end{equation*}

\n Due to the radial symmetry, this non-scattering problem also reduces to a one-dimensional problem, and the above equation then follows easily. Note the analogy: in both cases, the geometry of the inhomogeneity perfectly matches the wavefronts of the corresponding non-scattering incident wave. 
\end{remark}

\section{The Proofs} \label{SECT Proofs}
\setcounter{equation}{0}

In the following subsections, we present the proofs of Theorems~\ref{THM n<1}, \ref{THM n>1}, \ref{THM convex}, and Lemma~\ref{LEM phi}. As already discussed, the proofs proceed by contradiction: we assume that the incident plane wave is non-scattering. Then, by Lemma~\ref{LEM phi} (see \eqref{I = 0}), this assumption implies that $\PI(\bm{\xi}) = 0$ for all $\bm{\xi} =(\xi_1, \xi_2) \in \CC^2$ with $\bm{\xi} \cdot \bm{\xi} = 1$, i.e.,

\begin{equation} \label{xi^2=1}
\xi_1^2 + \xi_2^2 = 1.
\end{equation} 

\n The desired contradiction is obtained by finding $\bm{\xi}$ for which $\PI(\bm{\xi}) \neq 0$.

\subsection{Proofs of Theorems~\ref{THM n<1} and \ref{THM n>1}}

Recall that $\PI(\bm{\xi}) = C(\bm{\xi}) I(\bm{\xi})$, where $C$ and $I$ are given by \eqref{C I def}. For convenience, we restate these definitions:

\begin{equation*}
C(\bm{\xi}) = \frac{1}{a} - n^2 + \left( \frac{1}{a} - 1 \right) n \, \bm{\xi}\cdot \bm{\eta}  \qquad \text{and} \qquad  I(\bm{\xi}) = \int_D e^{ik(\bm{\eta} + n \bm{\xi}) \cdot \bm{x}} d\bm{x}.  
\end{equation*}

\vspace{.1in}

\textbf{The case $n<1$.} To simplify the analysis, we rotate the coordinate axes so that the incident direction is $\bm{\eta} = (1,0)$. The idea is simple: if both components of $\bm{\eta} + n \bm{\xi}$ are in $i\RR$, then $I(\bm{\xi})$ is clearly nonzero. So we attempt to select $\bm{\xi}$ so that

\begin{equation*}
1 + n \xi_1 \in i \RR \qquad \text{and} \qquad n \xi_2 \in i\RR.
\end{equation*}

\n Writing $\xi_2 = i y$ with $y \in \RR$, we conclude from \eqref{xi^2=1} that 

\begin{equation} \label{xi_1 lambda}
\xi_1 = \pm \sqrt{1+y^2},   
\end{equation}

\n which are real numbers. Therefore, $1+n\xi_1 \in i\RR$ only when it vanishes, i.e., $\xi_1 = -1/n$. This is consistent with \eqref{xi_1 lambda} provided $n<1$, and we choose the minus sign in \eqref{xi_1 lambda} with $y^2 = \frac{1}{n^2} - 1$.  With this choice of $\bm{\xi}$ we have $C(\bm{\xi}) = 1-n^2 \neq 0$, and so $\PI(\bm{\xi}) \neq 0$. This concludes the proof of Theorem~\ref{THM n<1} for the case $n<1$. 

\vspace{.1in}

\textbf{The case $n>1.$} The preceding method does not produce an admissible $\bm{\xi}$. Instead we consider real-valued vectors $\bm{\xi}$, so that $\bm{\xi} \in \BS$ is a true direction vector. 

Let us start by analyzing the integral $I(\bm{\xi})$. An important observation is that as $\bm{\xi}$ varies over all possible directions, so does the vector $k(\bm{\eta} + n \bm{\xi})$, since $n>1$. Indeed, note that $\bm{\eta} + n \bm{\xi}$ traces a circle centered at $\bm{\eta}$ with radius $n$. Because $n>1$, this circle contains the origin in its interior and thus spans all directions. In other words, given an arbitrary direction vector $\bm{\lambda} \in \BS$ we can find a vector $\bm{\xi}\in \BS$ and a constant $R>0$ such that

\begin{equation}\label{lambda xi}
k(\bm{\eta} + n \bm{\xi}) = R \bm{\lambda}.
\end{equation}

\n More precisely

\begin{equation*}
\bm{\xi} = \frac{1}{n} \left( \frac{R}{k} \bm{\lambda} - \bm{\eta} \right),
\end{equation*}

\n and since this must be a unit vector, $R$ must satisfy

\begin{equation} \label{R quadratic}
\frac{R^2}{k^2} - 2 \frac{R}{k} \bm{\lambda} \cdot \bm{\eta} + 1  - n^2 = 0.
\end{equation}

\n This is a quadratic equation with positive discriminant (as $n>1$). It has two roots, one positive and one negative. Since we are interested in the positive root, it follows that

\begin{equation*}
R = R(\bm{\lambda}) = k \left( \bm{\lambda} \cdot \bm{\eta} + \sqrt{n^2 - 1 + (\bm{\lambda} \cdot \bm{\eta})^2} \right).
\end{equation*}

\n In terms of the function $M$ defined in \eqref{M}, we have

\begin{equation} \label{R k M}
R = k M (\bm{\lambda} \cdot \bm{\eta}).
\end{equation}

\n Let us change the test vector variable from $\bm{\xi}$ to $\bm{\lambda}$, using \eqref{lambda xi} and \eqref{R k M}. In that case,

\begin{equation*}
I(\bm{\xi}) = \int_D e^{i R \bm{\lambda} \cdot \bm{x}} d \bm{x} =: \tilde{I} (\bm{\lambda}).
\end{equation*}

\n Using Fubini's theorem, we now aim to reduce the above integral to a one-dimensional integral. Note that the integrand is constant along lines perpendicular to $\bm{\lambda}$. Therefore, we slice $D$ using such line segments, and more specifically, change the integration variable from $\bm{x}$ to $(t,s)$, where 

\begin{equation*}
\bm{x} = t \bm{\lambda} + s \bm{\lambda}^{\perp},
\end{equation*}

\n Here, $t \bm{\lambda}$ varies over the orthogonal projection of $D$ onto the line in direction $\bm{\lambda}$. Translation of $D$ does not affect the nonvanishing of $\tilde{I}$, so we place the projection’s starting point at the origin: $t \in [0, w(\bm{\lambda})]$, where $w(\bm{\lambda})$ is the width of $D$ in the direction $\bm{\lambda}$. Let $D_t$ denote the slice of $D$ perpendicular to $\bm{\lambda}$ at coordinate $t$, i.e.

\begin{equation*}
D_t = \{s : t \bm{\lambda} + s \bm{\lambda}^{\perp} \in D\}
\end{equation*}

\n and let $L(t) = |D_t|$ be the one-dimensional Lebesgue measure of this slice. Thus,

\begin{equation} \label{I tilde slices}
\tilde{I} (\bm{\lambda}) = \int_0^{w(\bm{\lambda})} \int_{D_t} e^{i R t} ds dt = \int_0^{w(\bm{\lambda})} L(t) e^{i R t} dt.
\end{equation}

\n Taking the imaginary part, we arrive at

\begin{equation*}
\Im \tilde{I} (\bm{\lambda}) = \int_0^{w(\bm{\lambda})} L(t) \sin(Rt) dt.
\end{equation*}

\n Because $L(t) \geq 0$ and not identically zero, the integral is nonzero whenever $R w(\bm{\lambda}) \leq \pi$; equivalently, by \eqref{R k M}

\begin{equation*}
k \leq \frac{\pi}{w(\bm{\lambda}) M (\bm{\lambda} \cdot \bm{\eta})}.
\end{equation*}

\n In summary, if the above inequality holds for some direction vector $\bm{\lambda} \in \BS$, then $\tilde{I}(\bm{\lambda}) = I(\bm{\xi})$ is nonzero. Since $\PI = C I$, to obtain the desired contradiction it remains to ensure that the term

\begin{equation*}
C(\bm{\xi}) = \frac{1}{a} - n^2 + \left( \frac{1}{a} - 1 \right) n \, \bm{\xi}\cdot \bm{\eta} 
\end{equation*}

\n is also nonzero. We begin by analyzing this in terms of $\bm{\lambda}$ using \eqref{lambda xi} and  \eqref{R k M}. For convenience, introduce the variable

\begin{equation*}
r = \bm{\lambda} \cdot \bm{\eta} \in [-1, 1].
\end{equation*}

\n Invoking \eqref{lambda xi} and \eqref{R k M} we then have

\begin{equation*}
n \, \bm{\xi}\cdot \bm{\eta} = M(r) r - 1.
\end{equation*}

\n Let us study the equation $C(\bm{\xi}) = 0$. Since this has no solution when $a=1$, we assume $a \neq 1$ and perform straightforward simplifications to rewrite it as

\begin{equation} \label{r M(r)}
r M(r) = a \frac{n^2 - 1}{1-a} =: c_0.
\end{equation}

\n It remains to determine whether the above equation admits a solution $r$ in the interval $[-1,1]$. Using the definition of $M$ from \eqref{M}, we obtain

\begin{equation} \label{c_0/r - r}
\sqrt{n^2 - 1 + r^2} = \frac{c_0}{r} - r.
\end{equation}

\n Squaring both sides and simplifying, we arrive at

\begin{equation*}
r^2 = \frac{c_0^2}{n^2 -1 + 2c_0} = a^2 \frac{n^2 - 1}{1-a^2}.
\end{equation*}

\n For this equation to have a real solution, it is necessary that $a<1$. In that case, the solutions are $r = \pm r_0$, where $r_0$ is the square root of the right-hand side expression (see also \eqref{lambda eta = r}). It is easy to check that when $r=-r_0$ the expression on the right-hand side of \eqref{c_0/r - r} is negative. Therefore, the only solution is $r=r_0>0$. We have $r_0 \in (0,1]$ if and only if $na \leq 1$. Consequently, when $na > 1$, the constant $C(\bm{\xi})$ is always nonzero for any choice of $\bm{\lambda} \in \BS$, which completes the proof of part $(i)$ of Theorem~\ref{THM n>1}. On the other hand, when $na \leq 1$ (in which case it automatically follows that $a<1$) the constant $C(\bm{\xi})$ vanishes if and only if $\bm{\lambda} \cdot \bm{\eta} = r_0$. Let $\bm{\lambda}_\pm$ denote the two (or one) vectors satisfying this equation, then $C(\bm{\xi})$ is nonzero for all $\bm{\lambda} \in \BS \backslash \{\bm{\lambda}_\pm\}$. This completes the proof of part $(ii)$.

\vspace{.1in}

\textbf{The case $n=1$.} In this case $a\neq 1$, because otherwise $a=q=1$. We begin by noting that

\begin{equation*}
C(\bm{\xi}) = \left( \frac{1}{a} - 1 \right) \left( 1  + \bm{\xi} \cdot \bm{\eta} \right).
\end{equation*}

\n When $\bm{\xi}=-\bm{\eta}$ we have $C(\bm{\xi}) =0$ so this choice is not of interest. For $\bm{\xi}\neq -\bm{\eta}$ we have $C(\bm{\xi}) \neq 0$, and we may find $\bm{\lambda} \in \BS$ and a constant $R>0$ such that $k(\bm{\eta} + \bm{\xi}) = R \bm{\lambda}$. A simple calculation shows that $\bm{\lambda}\cdot \bm{\eta}=R/2k>0$. Similarly for any $\bm{\lambda}\in \BS$, with $\bm{\lambda}\cdot \bm{\eta}>0$, we may find $\bm{\xi}\in \BS$, $R>0$, such that  $k(\bm{\eta} + \bm{\xi}) = R \bm{\lambda}$ and $\bm{\xi} \neq -\bm{\eta}$ so $C(\bm{\xi}) \neq 0$. Vectors of interest (those for which $C(\bm{\xi})\neq 0$) are thus those $\bm{\lambda} \in \BS$ satisfying $\bm{\lambda} \cdot \bm{\eta} > 0$.

\n An argument analogous to that for $n>1$ now shows that scattering occurs, provided

\begin{equation*}
k \leq \frac{\pi}{\displaystyle 2 w(\bm{\lambda}) \bm{\lambda} \cdot \bm{\eta}},
\end{equation*}

\n for some $\bm{\lambda} \in \BS$ with $\bm{\lambda} \cdot \bm{\eta} > 0$. Since $D$ is bounded, the widths $w(\bm{\lambda})$ remain bounded. We now choose $\bm{\lambda}$ such that $\bm{\lambda} \cdot \bm{\eta} \to 0^+$, in which case the above inequality grows to cover the entire interval $k \in (0, \infty)$, and we conclude that plane waves always scatter. This finishes the proof of Theorem~\ref{THM n<1}.

\subsection{Proof of Theorem~\ref{THM convex}}

Our starting point is the formula \eqref{I tilde slices}:

\begin{equation*}
I(\bm{\xi}) = \tilde{I} (\bm{\lambda}) = \int_0^{w(\bm{\lambda})} L(t) e^{i R(\bm{\lambda}) t} dt,
\end{equation*}

\n where $\bm{\lambda} \in \BS$ is an arbitrary vector and its relation to $\bm{\xi}$ is given by \eqref{lambda xi}. As before, $R(\bm{\lambda}) = k M (\bm{\lambda} \cdot \bm{\eta})$, and $L(t)$ denotes the length of the slice of $D$ perpendicular to $\bm{\lambda}$ at coordinate $t \in [0,w(\bm{\lambda})]$. Since $D$ is convex, the function $L(t)$ is concave. This can be easily seen once we rotate the coordinate axes so that $\bm{\lambda}$ lies along the horizontal direction. Then we can write $L(t) = L_2(t) - L_1(t)$, where $s=L_1(t)$ and $s=L_2(t)$ denote the lower and upper portions of the boundary $\partial D$, respectively. Both $L_2$ and $-L_1$ are concave functions. 

Taking the real part of the above expression, suppressing the $\bm{\lambda}$ dependence in the notation, and integrating by parts, we obtain\footnote{$L'$ exists and is an integrable function, since $D$ is convex.}

\begin{equation*}
\Re \tilde{I}  = \int_0^{w} L(t) \cos \left( R t \right) dt = - \frac{1}{R} \int_0^{w} L'(t) \sin \left( R t \right) dt.
\end{equation*}

\n Here, the boundary terms vanish because $L(0) = L(w) = 0$. This is due to the strict convexity of $D$, which prevents its boundary from containing line segments. Changing variables reduces the integral to

\begin{equation*}
\Re \tilde{I}  = \int_0^{wR} \tilde{L}(x) \sin x dx, \qquad \qquad \tilde{L}(x) = -\frac{1}{R^2} L'\left( \frac{x}{R} \right).
\end{equation*}

\n Since $L$ is concave, $\tilde{L}$ is an increasing function. Suppose now that the parameters $w$ and $R$ are such that $wR = 2\pi$. In that case,

\begin{equation*}
\Re \tilde{I}  = \int_0^{2 \pi} \tilde{L}(x) \sin x dx = \int_0^\pi + \int_\pi^{2\pi} = \int_0^\pi \left[ \tilde{L}(x) - \tilde{L}(\pi + x) \right] \sin x dx  < 0.
\end{equation*}

\n In other words, $\tilde{L}$ puts more weight on the second interval $(\pi, 2\pi)$ on which sine is negative, making the total integral negative. It is easy to see that the same conclusion can be obtained also in the case where $w R = 2\pi m$ for some integer $m=1,2,...$ (simply split the integral into a sum of $m$ terms and conclude that each term is negative). Writing more explicitly, we obtain that at the wave numbers

\begin{equation*}
k = \frac{2\pi m}{w(\bm{\lambda}) M(\bm{\lambda} \cdot \bm{\eta})}, \qquad \qquad \bm{\lambda} \in \BS, \ m=1,2,...
\end{equation*}

\n the integral $\tilde{I}(\bm{\lambda})$ is nonzero. The right-hand side expression is a continuous function of $\bm{\lambda}$ and as $\bm{\lambda}$ varies over $\BS$, it attains all values between its minimum and maximum. In other words, $\tilde{I}(\bm{\lambda})$ is nonzero for some $\bm{\lambda} \in \BS$ (and thus $I$ is nonzero for some $\bm{\xi}\in \BS$) provided 

\begin{equation*}
k \in \left[ \frac{2\pi m}{h_1(\bm{\eta})}, \frac{2\pi m}{h_0(\bm{\eta})}\right] \qquad \qquad m=1,2,...
\end{equation*}

\n where $h_0$ and $h_1$ are defined in \eqref{h_0 def} and \eqref{h_1 def}, respectively. To conclude the proof, we recall that $C(\bm{\xi})$ is nonvanishing for all $\bm{\lambda}$ when $na > 1$, whereas for $na \leq 1$ we must exclude the directions $\bm{\lambda} = \bm{\lambda}_\pm$ (see part $(ii)$ of Theorem~\ref{THM n>1}).

\subsection{Proof of Lemma~\ref{LEM phi}} \label{SECT LEM}

Setting $u = u^{\text{tr}} - u^{\text{in}}$ and using \eqref{u^in Helm} we rewrite the system \eqref{nonsc system} in terms of $u$ and $u^{\text{in}}$:

\begin{equation*}
\begin{cases}
\Delta u + k^2 n^2 u = k^2 \left(1 - n^2 \right) u^{\text{in}} \qquad  & \text{in} \ D,
\\[.1in]
u = 0, \qquad \partial_\nu u = \left( \frac{1}{a} - 1 \right) \partial_\nu u^{\text{in}} &\text{on} \ \Gamma = \partial D,
\end{cases}
\end{equation*}

\n Multiplying the above PDE by a test function $\phi \in H^2(D)$ and integrating over $D$, we obtain

\begin{equation*}
\begin{split}
k^2 \left(1 - n^2 \right) \int_D u^{\text{in}} \phi d\bm{x} &= \int_D \Delta u \phi + k^2 n^2 u \phi d\bm{x} = 
\\[.1in]
&=
\int_D u \Delta \phi d\bm{x} + \int_{\Gamma} \left(\partial_\nu u \phi - u \partial_\nu \phi\right) ds + \int_D k^2 n^2 u \phi d\bm{x},    
\end{split}
\end{equation*}

\n where we have used  Green's identity. Now, using the boundary conditions satisfied by $u$, we get

\begin{equation*}
k^2 \left(1 - n^2 \right) \int_D u^{\text{in}} \phi d\bm{x} = \int_D u \left( \Delta \phi + k^2 n^2 \phi \right) d\bm{x} + \left( \frac{1}{a} - 1 \right) \int_{\Gamma} \partial_\nu u^{\text{in}} \, \phi ds.
\end{equation*}

\n Imposing $\Delta \phi + k^2 n^2 \phi = 0$ in $D$, the above formula reduces to

\begin{equation} \label{1}
k^2 \left(1 - n^2 \right) \int_D u^{\text{in}} \phi d\bm{x} + \left( 1 - \frac{1}{a} \right) \int_{\Gamma} \partial_\nu u^{\text{in}} \, \phi ds = 0.
\end{equation}

\n We use the divergence theorem to rewrite the second integral above:

\begin{eqnarray*}
\int_{\Gamma} \partial_\nu u^{\text{in}} \, \phi ds &=& \int_D \div \left( \nabla  u^{\text{in}} \, \phi \right) d\bm{x} = \int_D \left(\nabla u^{\text{in}} \cdot \nabla \phi + \Delta u^{\text{in}} \, \phi\right) d\bm{x} \\
 &=& \int_D \left(\nabla u^{\text{in}} \cdot \nabla \phi - k^2 u^{\text{in}} \phi\right) d\bm{x}. 
\end{eqnarray*}

\n The last step follows from the PDE \eqref{u^in Helm} satisfied by the incident wave. To complete the proof, we substitute the above expression into \eqref{1}.

\section*{Acknowledgements}
{The authors would like to thank Fioralba Cakoni for many interesting discussions on non-scattering and transmission eigenvalues. The first author would also like to thank Andrea Al{\`u} for introducing him to the Salisbury screen model. This work was partially supported by NSF Grant DMS-22-05912 }

\bibliographystyle{abbrv}
\bibliography{ref}

\end{document}